\documentclass[11pt]{amsart}
\usepackage{amstext,amsmath,amssymb,amsfonts}
\usepackage{amsthm}
\usepackage{xcolor}
\usepackage{soul}
\usepackage[utf8]{inputenc}
\usepackage[english]{babel}
\usepackage[T1]{fontenc} 
\usepackage{braket,mathtools}
\usepackage{hyperref,csquotes}
\usepackage[english,capitalize]{cleveref}
\usepackage[a4paper,twoside,top=0.74in, bottom=0.74in, left=0.68in, right=0.68in]{geometry}
\usepackage{tikz}
\usepackage{enumitem}
\usetikzlibrary{patterns}

\definecolor{newblue}{rgb}{0.2, 0.3, 0.85}
\hypersetup{colorlinks=true, linkcolor=newblue, citecolor=newblue, urlcolor  = newblue}

\usepackage[maxbibnames=99,backend=biber, sorting=nyt, doi=false, url=false, isbn=false, style=alphabetic]{biblatex}
\addbibresource{Bibliography.bib}

\numberwithin{equation}{section}

\newcommand{\R}{\mathbb{R}}
\newcommand{\RR}{\mathbb{R}}

\renewcommand{\epsilon}{\varepsilon}
\newcommand{\eps}{\varepsilon}

\newcommand{\Ric}{{\rm Ric}}

\newcommand{\Lip}{{\rm Lip}}

\DeclarePairedDelimiter\scal{\langle}{\rangle} 

\newcommand{\D}{\nabla}
\newcommand{\p}{\partial}
\renewcommand{\P}{\mathcal{P}}
\renewcommand{\H}{\mathcal{H}}

\DeclareMathOperator{\loc}{loc}
\renewcommand{\bar}{\overline}
\renewcommand{\leq}{\leqslant}
\renewcommand{\geq}{\geqslant}
\renewcommand{\le}{\leqslant}
\renewcommand{\ge}{\geqslant}
\renewcommand{\tilde}{\widetilde}

\DeclareMathOperator{\biRic}{biRic}
\DeclareMathOperator{\Sect}{Sect}

\theoremstyle{plain}
\newtheorem{theorem}{Theorem}[section]

\newtheorem{lemma}[theorem]{Lemma}
\newtheorem{corollary}[theorem]{Corollary}
\theoremstyle{remark}

\newtheorem{remark}[theorem]{Remark}
\theoremstyle{definition}

\author{Gioacchino Antonelli}
\address{Gioacchino Antonelli
\hfill\break Courant Institute Of Mathematical Sciences (NYU), 251 Mercer Street, 10012, New York, USA}
\email{ga2434@nyu.edu}

\author{Marco Pozzetta}
\address{Marco Pozzetta
\hfill\break Dipartimento di Matematica, Politecnico di Milano, Via Bonardi 9, 20133, Milano, Italy} \email{marco.pozzetta@polimi.it}

\author{Kai Xu}
\address{Kai Xu
\hfill\break Department of Mathematics, Duke University, Durham, NC 27708, USA}
\email{kai.xu631@duke.edu}

\begin{document}

\title[Spectral splitting theorem]{A sharp spectral splitting theorem}

\date{\today}
\subjclass{53C21, 53C42}
%\keywords{Splitting theorem, nonnegative Ricci, spectral Ricci lower bound}

\begin{abstract}
    We prove a sharp spectral generalization of the Cheeger--Gromoll splitting theorem. We show that if a complete non-compact Riemannian manifold $M$ of dimension $n\geq 2$ has at least two ends and
    \[
    \lambda_1(-\gamma\Delta+\mathrm{Ric})\geq 0,
    \]
    for some $\gamma<\frac{4}{n-1}$, then $M$ splits isometrically as $\mathbb R\times N$ for some compact manifold $N$ with nonnegative Ricci curvature. We show that the constant $\frac{4}{n-1}$ is sharp, and the multiple-end assumption is necessary for any $\gamma>0$. 
\end{abstract}

\maketitle

%\vspace{-12pt}

 % \tableofcontents

\vspace{-15pt}

\section{Introduction}

The celebrated Cheeger--Gromoll splitting theorem \cite{CheegerGromoll} asserts that a complete Riemannian manifold $(M,g)$ having nonnegative Ricci curvature and containing a geodesic line must split isometrically; namely, one has $(M,g)\cong(\R\times N,\mathrm{d}t^2+g_N)$ for some manifold $(N,g_N)$. As a consequence, a complete manifold with nonnegative Ricci curvature and at least two ends must split. In this paper, we give a sharp generalization of the latter statement to manifolds with nonnegative Ricci curvature in the spectral sense.

On a smooth complete $n$-dimensional Riemannian manifold $(M^n,g)$ with $n\geq 2$, we define the function $\mathrm{Ric}:M\to \mathbb R$ by
\[
\mathrm{Ric}(x):=\inf_{\substack{v\in T_xM\\ g(v,v)=1}} \mathrm{Ric}_x(v,v).
\]
Note that $\Ric\in\Lip_{\text{loc}}(M)$. For a constant $\gamma\geq0$, we say that \textit{$M$ satisfies $\lambda_1(-\gamma\Delta+\Ric)\geq0$}
if any of the two following equivalent conditions holds:
\begin{enumerate}[label={(\roman*)}, topsep=2pt, itemsep=1pt]
    \item for all $\varphi\in C^1_c(M)$ it holds $\int_M\gamma|\D\varphi|^2+\Ric\cdot\varphi^2\geq0$,
    \item there exist $\alpha\in (0,1)$ and $u\in C^{2,\alpha}(M)$ such that $u>0$ and $-\gamma\Delta u+\Ric\cdot u\geq0$ on $M$.
\end{enumerate}
The equivalence of the latter two conditions is readily established on compact manifolds, and follows from \cite{Fischer-Colbrie-Schoen} on noncompact manifolds. In fact, in the noncompact case, Item (i) implies the existence of $u>0$ with $-\gamma\Delta u+\Ric\cdot u=0$ \cite{Fischer-Colbrie-Schoen}.

The study of Ricci curvature lower bounds in the spectral sense has recently found connections with the stable Bernstein problem \cite{ChodoshLiBernstein2, CLMS, Mazet}, the geometry of biRicci curvature \cite{ChuLeeZhu24biRicci, Xu24Dimension}, and Ricci curvature bounds in the Kato sense \cite{CMT}. In \cite{AX24}, the first and third named authors proved a sharp volume comparison and Bonnet--Myers theorem for manifolds with positive Ricci curvature in the spectral sense; we refer to \cite{AX24} for a more detailed survey of past results. 

The main result of this paper is the following sharp spectral splitting theorem.

\begin{theorem}\label{thm:mainIntro}
    Let $n\geq 2$, $\gamma<\frac{4}{n-1}$, and let $(M^n,g)$ be an $n$-dimensional smooth complete noncompact Riemannian manifold without boundary. Assume that $M$ has at least two ends and satisfies
    \begin{equation}\label{eq:main_condition}
        \lambda_1(-\gamma\Delta+\Ric)\geq0.
    \end{equation}
    Then $\Ric\geq0$ on $M$. In particular, the manifold splits isometrically as $(M,g)\cong(\R\times N,\mathrm{d}t^2+g_N)$ for some compact manifold $N$ with nonnegative Ricci curvature.
\end{theorem}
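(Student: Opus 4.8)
The plan is to run the Cheeger--Gromoll scheme, but with the subharmonicity of Busemann functions replaced by a \emph{drift} subharmonicity extracted from the positive solution $u$ of $-\gamma\Delta u+\Ric\,u=0$ (provided by Item~(ii), and in the noncompact case by \cite{Fischer-Colbrie-Schoen}). Write $\mathcal L:=\Delta+\gamma\langle\D\log u,\D\,\cdot\,\rangle$ for the weighted Laplacian of the measure $u^{\gamma}\,\d\mathrm{vol}_g$, with $\infty$-Bakry--Émery Ricci tensor $\Ric_{\mathcal L}:=\Ric+\gamma\D^{2}\log u$. First, since $M$ is complete, noncompact and has at least two ends, the classical argument produces a geodesic line $\sigma\colon\R\to M$: fix a compact set separating two ends, take points $p_i,q_i$ diverging in the two ends, join them by minimizing segments (which must cross the separating set), recenter and pass to an Arzelà--Ascoli limit. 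Let $b^{\pm}$ be the Busemann functions of the two half-lines; they are continuous and $1$-Lipschitz, $|\D b^{\pm}|=1$ a.e., and $b^{+}+b^{-}\le 0$ with equality along $\sigma$.

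The technical heart is a \emph{spectral Laplacian comparison}: $\mathcal L b^{\pm}\ge 0$ in the barrier sense. Fixing $x_0$ and approximating $b^{+}$ near $x_0$ by the smooth support functions $\dist(\sigma(t),\cdot)-t$ ($t$ large), one needs an upper bound of order $O(1/\dist)$ on the drift mean curvature $\mathcal L\dist$ along the (almost) radial minimizing geodesics from $\sigma(t)$ to $x_0$. Starting from the Riccati inequality $(\Delta\dist)'+\tfrac{(\Delta\dist)^{2}}{n-1}+\Ric(\,\cdot\,,\cdot\,)\le0$, bounding $\Ric(\,\cdot\,,\cdot\,)\ge\Ric=\gamma\Delta u/u$, and re-expressing $\Delta u/u$ along the geodesic in terms of the drift $\gamma\,\partial_s\log u$, its derivative, and $\Delta\dist$ — here one must absorb the transverse Hessian of $u$, for which I would use an index-form argument with a comparison field built from $u$ restricted to the geodesic — one reduces the desired decay to an algebraic inequality: the quadratic (in the drift $q:=\partial_s\log u$) $-\tfrac{(P-\gamma q)^{2}}{n-1}+(\gamma^{2}-\gamma)q^{2}-\gamma Pq+\tfrac{P^{2}}{N}\le0$ must hold for all $q$ and some finite effective dimension $N$. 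A discriminant computation, using the identity $(n-3)^{2}+4(n-2)=(n-1)^{2}$, shows this is possible with finite $N$ exactly when $\gamma<\tfrac{4}{n-1}$ (equality forces $N=\infty$, which is insufficient for the $O(1/\dist)$ decay). Letting $t\to\infty$ gives $\mathcal L b^{\pm}\ge0$ in the barrier sense. This step — converting the global scalar bound into a one-sided pointwise inequality for the Busemann functions, in particular controlling the transverse Hessian term and passing to the Busemann limit — is the main obstacle.

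Granting this, $b^{+}+b^{-}$ is $\mathcal L$-subharmonic, $\le0$, and equals $0$ on $\sigma$; since $\mathcal L$ is uniformly elliptic with continuous drift ($u\in C^{2,\alpha}$), the strong maximum principle forces $b^{+}+b^{-}\equiv0$. Hence $b:=b^{+}=-b^{-}$ is simultaneously a barrier sub- and supersolution of $\mathcal L$, thus a genuine $C^{2,\alpha}$, $\mathcal L$-harmonic function with $|\D b|\equiv1$. The drift Bochner formula gives $0=\tfrac12\mathcal L|\D b|^{2}=|\D^{2}b|^{2}+\langle\D b,\D\mathcal L b\rangle+\Ric_{\mathcal L}(\D b,\D b)=|\D^{2}b|^{2}+\Ric_{\mathcal L}(\D b,\D b)$, and the equality case of the comparison above yields $\Ric_{\mathcal L}(\D b,\D b)\ge 0$ (again via the equation for $u$); therefore $\D^{2}b\equiv0$, so $\D b$ is a parallel unit field and $(M,g)\cong(\R\times N,\d t^{2}+g_N)$. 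Since $M$ has exactly two ends, $N$ is compact.

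Finally, on $\R\times N$ one has, as functions, $\Ric=\min\{0,\Ric_N\}$ (the $\R$-direction carries no Ricci), where $\Ric_N$ is the infimum function of $N$. Testing condition~(i) against $\varphi(t,y)=\eta(t)$ with $\eta\in C^{1}_c(\R)$ gives $\gamma\,\mathrm{vol}(N)\!\int_\R(\eta')^{2}+\big(\!\int_N\min\{0,\Ric_N\}\big)\!\int_\R\eta^{2}\ge0$; letting $\int(\eta')^{2}/\!\int\eta^{2}\to0$ forces $\int_N\min\{0,\Ric_N\}\ge0$, hence $\Ric_N\ge0$ everywhere by continuity, hence $\Ric\ge0$ on $M$, with $N$ of nonnegative Ricci curvature, as claimed. (Alternatively, once $\Ric\ge0$ is established one may simply invoke the classical Cheeger--Gromoll theorem on the line $\sigma$ to obtain the splitting.)
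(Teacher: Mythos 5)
Your approach (Busemann functions for a drift Laplacian $\mathcal L=\Delta+\gamma\langle\D\log u,\D\cdot\rangle$) is genuinely different from the paper's, which uses warped $\mu$-bubbles separating the two ends together with a surface-capturing perturbation of $u$. Unfortunately the step you yourself flag as ``the main obstacle'' --- the spectral Laplacian comparison $\mathcal Lb^{\pm}\ge0$ --- is not merely technically incomplete: it cannot hold in the stated generality. After producing the geodesic line, your argument never uses the two-ends hypothesis again; if steps 2--4 were correct they would show that \emph{any} manifold satisfying \eqref{eq:main_condition} and containing a line splits. But \cref{rmk:Rn} exhibits, for every $n\ge3$ and $\gamma>0$, a small compact perturbation of flat $\R^n$ that satisfies \eqref{eq:main_condition}, contains infinitely many geodesic lines, and does not split. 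So the drift comparison must fail for some Busemann function of such a manifold, and no refinement of the ODE analysis along a single geodesic can rescue it.

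The structural reason is the transverse term you propose to ``absorb.'' The hypothesis only gives the traced, global identity $\gamma\Delta u=\Ric\cdot u$; along a radial geodesic one has $\Delta u=u_{ss}+m\,u_s+\Delta_\Sigma u$, and the spectral condition gives no one-sided pointwise control of $\Delta_\Sigma u$ (equivalently, of the radial component $\D^2\log u(\partial_s,\partial_s)$ relative to the trace), which is exactly what a Bakry--\'Emery Riccati comparison with finite effective dimension $N$ requires. An index-form argument for a geodesic produces sectional/Ricci curvature in the radial direction weighted by functions of arclength; it provides no mechanism to integrate the tangential Laplacian of $u$ by parts along a one-dimensional object. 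The paper's proof deploys the very same discriminant identity you found (your quadratic in $q$ is their \eqref{eqn:Central}--\eqref{eqn:Discriminant}), but on a \emph{closed} weighted-minimal hypersurface obtained as a $\mu$-bubble separating the two ends: there the term $\gamma u^{-1}\Delta_{\p^*\Omega}u$ in the second variation is handled by integration by parts against the test function $u^{-\gamma/2}$, which is precisely how the transverse Hessian is eliminated and where the two-ends hypothesis enters essentially (it guarantees a nontrivial separating homology class for the $\mu$-bubbles). If you want to salvage a comparison-geometry route you would need to replace the pointwise barrier inequality by an integrated one, which effectively leads back to a stability/second-variation argument of the paper's type. (Your final step deducing $\Ric_N\ge0$ on the cross-section by testing \eqref{eq:main_condition} with $\varphi=\eta(t)$ is fine once the splitting is known, but it rests on the flawed step.)
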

With regard to the sharpness of our assumptions, we have the following observations.

\begin{enumerate}
    \item The range $\gamma<\frac{4}{n-1}$ for $\gamma$ is sharp for any $n\geq2$. Indeed, for each $\gamma\geq\frac4{n-1}$ there exists a complete manifold $M^n$ with two ends that satisfies \eqref{eq:main_condition}, and that does not split isometrically. See \cref{rmk:gamma} for details.

    \item For $n\geq3$ and $\gamma>0$, in \cref{rmk:Rn} we construct a small compact perturbation of the flat $\R^n$ that satisfies \eqref{eq:main_condition} but does not split isometrically. Hence, without further assumptions, the spectral splitting theorem does not hold for manifolds containing geodesic lines: in fact, a compact perturbation of $\R^n$ contains infinitely many geodesic lines.
\end{enumerate}

We remark that other types of splitting theorems have been proved in \cite[Theorem 2.1]{LiWangJDG} and \cite[Corollary 5.3 \& Theorem 7.2]{LiWangAsens}, where suitable spectral lower bounds on the curvature imply the splitting of a manifold as a warped product.
\smallskip

The proof of \cref{thm:mainIntro} follows by incorporating the $\mu$-bubble technique of Gromov \cite{GromovPositiveCurvature, GromovFourLectures} and a surface-capturing technique inspired by arguments in \cite{GangLiu, CCE, CEM, ChuLeeZhu24Kahler}.
We also refer to \cite[Section 2]{Zhu23JDG} and \cite[Section 3]{ChodoshliSoapBubbles24} for further introductions to $\mu$-bubbles. We remark that the method used in our proof is different from the one used in the original proof of the splitting theorem \cite{CheegerGromoll} (see also \cite{EschenburgHeintze}). In fact, our method gives a different alternative proof of the classical splitting theorem on manifolds with two ends, $\mathrm{Ric}\geq 0$, and $\inf_{x\in M}|B(x,1)|>0$, see the second part of \cref{rem:ZeroGradientOnPerimeterMinimizer}.

A brief description of our main idea is as follows. We are first inspired by the work of J. Zhu \cite{Zhu23JDG}, where $\mu$-bubbles are used to give an alternative proof of the following result: if $3\le n\le 7$  and there is a complete metric on $\mathbb T^{n-1}\times\RR$ with nonnegative scalar curvature, then it must be flat. The idea of \cite{Zhu23JDG} is to find an area-minimizing hypersurface $\Sigma\subset \mathbb T^{n-1}\times\R$ which arises as a limit of $\mu$-bubbles. Indeed, once such a $\Sigma$ is found, a standard foliation argument can be employed to prove the rigidity. There are two key challenges in this strategy: (1) ensuring the existence of a nonempty limiting hypersurface $\Sigma$, which amounts at preventing the approximating 
$\mu$-bubbles from drifting to infinity, and (2) proving the compactness of $\Sigma$; indeed, each approximating $\mu$-bubble is compact, but the limit hypersurface may not.

Now let us turn to the setting of Theorem \ref{thm:mainIntro}. By \cite{Fischer-Colbrie-Schoen}, we have a function $u$ with
\begin{equation}\label{eq:intro-supersol}
    u>0,\qquad -\gamma\Delta u+\Ric\cdot u = 0.
\end{equation}
Given that $M$ has at least two ends, an analog of Zhu's $\mu$-bubble approximation argument produces a hypersurface $\Sigma\subset M$ that locally minimizes the weighted area $\Sigma\mapsto\int_\Sigma u^\gamma$. However, in our setting, $\Sigma$ might be noncompact, so the foliation argument is no longer available. 

To resolve this issue, we give up the foliation argument and instead argue as follows. We find an approximation scheme that for each $x\in M$ gives a weighted minimal hypersurface $\Sigma$ passing through $x$. Then the stability inequality yields $|\nabla u|=0$ on $\Sigma$, hence arbitrariness of $x$ will imply that $u$ is constant. Thus $\Ric\geq0$, and the splitting follows. To find such a $\Sigma$, we adapt the surface-capturing technique of \cite{CCE, CEM}. The core idea is explained by the following instance: if a metric on $\mathbb T^3$
has positive scalar curvature everywhere except in a set $U$, then any homologically area-minimizing surface must intersect $U$. With this in mind, we perturb the function $u$ in \eqref{eq:intro-supersol} so that $-\gamma\Delta u+\Ric\cdot u>0$ everywhere outside a small ball $B(x,r)$, see \cref{lemma:perturb_new}. Then, due to the stability inequality, a weighted area minimizer must intersect $B(x,r)$. Taking $r\to0$, and letting the perturbation tend to zero, we obtain the desired hypersurface as a limit.
\smallskip

Our \cref{thm:mainIntro} can be used to infer information on the ends of stable minimal hypersurfaces in curved manifolds. In \cite{CaoShenZhu97}, the authors prove that stable minimal hypersurfaces in $\mathbb R^{n+1}$ have exactly one end. In \cite{LiWangCrelle} the authors prove that properly immersed stable minimal hypersurfaces in manifolds with nonnegative sectional curvature either have one end, or they split isometrically and are totally geodesic.

Let us recall that, for orthonormal vectors $v,v'\in T_pM$, the biRicci curvature operator is defined as
\[
\mathrm{biRic}_p(v,v'):=\mathrm{Ric}_p(v,v)+\mathrm{Ric}_p(v',v')-\mathrm{Sect}_p(v\wedge v').
\]
Define the function $\mathrm{biRic}:M\to \mathbb R$ as
\[
\mathrm{biRic}(x):=\inf_{\substack{v,v'\in T_xM \\ \text{$v,v'$ orthonormal}}} \mathrm{biRic}_x(v,v').
\]
Notice that $\mathrm{Sect}\geq 0$ implies $\mathrm{biRic}\geq 0$. The following immediate corollary of \cref{thm:mainIntro} generalizes \cite[Theorem 0.1]{LiWangCrelle} to manifolds with $\mathrm{biRic}\geq 0$ in ambient dimension $n\leq 5$. Contrarily to \cite[Theorem 0.1]{LiWangCrelle}, we do not require the immersion to be proper.

\begin{corollary}\label{cor:RigidityTwoEndedBiRicci}
    Let $(M^n,g)$ be a smooth complete noncompact Riemannian manifold with $\mathrm{biRic}\geq 0$, and assume $n\leq 5$. Let $\Sigma^{n-1}\hookrightarrow M^n$ be a noncompact, complete, connected, immersed, two-sided, stable minimal hypersurface. Then either
    \begin{enumerate}
        \item $\Sigma$ has one end, or
        \item $\Sigma$ splits isometrically as $\mathbb R\times \Sigma'$, where $\Sigma'$ is compact and $\mathrm{Ric}_{\Sigma'}\geq 0$. Moreover, in this case, $\Sigma$ is totally geodesic in $M$, and $\mathrm{Ric}_M(\nu_\Sigma,\nu_\Sigma)=0$, where $\nu_\Sigma$ is a unit normal of $\Sigma$.
    \end{enumerate} 
\end{corollary}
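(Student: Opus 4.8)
The plan is to reduce the corollary to Theorem \ref{thm:mainIntro} applied to the hypersurface $\Sigma$ itself, equipped with its induced metric. The key point is the classical computation relating the stability operator of a two-sided minimal hypersurface to the biRicci curvature of the ambient manifold. Specifically, if $\Sigma^{n-1}\hookrightarrow M^n$ is two-sided with unit normal $\nu$, second fundamental form $A$, and $\varphi\in C^\infty_c(\Sigma)$, then the stability inequality reads
\[
\int_\Sigma |\D_\Sigma\varphi|^2 \geq \int_\Sigma \left(|A|^2 + \Ric_M(\nu,\nu)\right)\varphi^2.
\]
On the other hand, the Gauss equation gives, for any unit tangent vector $e\in T_p\Sigma$,
\[
\Ric_\Sigma(e,e) = \Ric_M(e,e) - \Sect_M(e\wedge\nu) + \langle A e, e\rangle H - \langle A^2 e, e\rangle,
\]
and since $\Sigma$ is minimal ($H=0$) and $\langle A^2 e,e\rangle \leq |A|^2$, this yields
\[
\Ric_\Sigma(e,e) \geq \Ric_M(e,e) + \Ric_M(\nu,\nu) - \Sect_M(e\wedge\nu) - |A|^2 = \biRic_M(e,\nu) - |A|^2 \geq -|A|^2,
\]
using $\biRic_M\geq0$. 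Taking the infimum over unit $e$, we obtain the pointwise bound $\Ric_\Sigma \geq -|A|^2$ on $\Sigma$. Combining with the stability inequality, for every $\varphi\in C^1_c(\Sigma)$,
\[
\int_\Sigma |\D_\Sigma\varphi|^2 \geq \int_\Sigma |A|^2\varphi^2 \geq -\int_\Sigma \Ric_\Sigma\, \varphi^2,
\]
which is precisely condition (i) in the definition of $\lambda_1(-\Delta_\Sigma + \Ric_\Sigma)\geq0$, i.e. $\lambda_1(-\gamma\Delta_\Sigma+\Ric_\Sigma)\geq0$ with $\gamma=1$.

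Now I would apply Theorem \ref{thm:mainIntro} to $(\Sigma^{n-1}, g_\Sigma)$. This requires $n-1\geq 2$ (so $n\geq 3$; the cases $n=2$ have $\Sigma$ one-dimensional and the statement is vacuous or trivial, so assume $n\geq3$) and the sharp constant condition $\gamma = 1 < \frac{4}{(n-1)-1} = \frac{4}{n-2}$, which holds exactly when $n-2 < 4$, i.e. $n\leq 5$ — matching the hypothesis of the corollary. Since $\Sigma$ is assumed noncompact and complete, if $\Sigma$ has at least two ends, Theorem \ref{thm:mainIntro} gives that $\Sigma$ splits isometrically as $\R\times\Sigma'$ with $\Sigma'$ compact and $\Ric_{\Sigma'}\geq0$. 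Thus either $\Sigma$ has one end (case (1)), or the splitting holds (case (2)). It remains to upgrade the splitting to the rigidity statements in case (2): total geodesy of $\Sigma$ in $M$ and $\Ric_M(\nu,\nu)=0$. For this, note that when $\Sigma$ splits, $\Ric_\Sigma\geq0$ everywhere (a product of a line with a nonnegatively-Ricci compact manifold), so in particular $\lambda_1(-\Delta_\Sigma+\Ric_\Sigma)\geq0$ is achieved with the constant function. Tracking back through the chain of inequalities above: the stability inequality with test functions approaching constants (using compactness of $\Sigma'$ and letting the $\R$-direction cutoff spread out) forces $\int_\Sigma |A|^2\varphi^2 \to 0$ and $\int_\Sigma \Ric_M(\nu,\nu)\varphi^2\to 0$, and since both integrands are... well, $|A|^2\geq0$ everywhere, we conclude $A\equiv0$, i.e. $\Sigma$ is totally geodesic. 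Then the Gauss equation forces equality in $\Ric_\Sigma(e,e)\geq \biRic_M(e,\nu)$, and combined with $\Ric_\Sigma\geq0$ being sharp (equality somewhere on $\Sigma'$, since $\Sigma'$ compact with $\Ric\geq0$ need not be flat but the splitting direction contributes a zero Ricci eigenvalue)... I would instead argue more directly: $A\equiv0$ and the vanishing of the stability gap forces $\Ric_M(\nu,\nu)\equiv0$ along $\Sigma$ as well, since the second variation must vanish identically.

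The main obstacle I anticipate is the last step — extracting the full rigidity ($A\equiv0$ and $\Ric_M(\nu,\nu)=0$) from the splitting of $\Sigma$. The issue is that the splitting of $\Sigma$ is an intrinsic statement, and one must feed it back into the ambient stability inequality to control extrinsic quantities. The cleanest route is probably: once $\Sigma\cong\R\times\Sigma'$ with $\Sigma'$ compact, plug test functions $\varphi(t,y)=\eta(t/R)$ (independent of $y\in\Sigma'$, with $\eta$ a fixed cutoff) into the stability inequality; the left side is $O(R^{-1}|\Sigma'|)\to0$, while the right side is $\geq \int_\Sigma(|A|^2+\Ric_M(\nu,\nu))\varphi^2$. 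Since we also know $|A|^2 \geq -\Ric_\Sigma = 0$ wait — we need $\Ric_M(\nu,\nu) \geq 0$ too, which does NOT follow from $\biRic_M\geq0$ alone. So one cannot directly conclude the integrand is nonnegative. The resolution: use the \emph{refined} pointwise inequality $|A|^2 + \Ric_M(\nu,\nu) \geq \Ric_M(\nu,\nu) \geq \biRic_M(e,\nu) - \Ric_M(e,e) + \Sect_M(e\wedge\nu)$... this is getting circular. The correct fix is to observe that on the split $\Sigma$, choosing $e$ to be the unit vector in the $\R$-direction gives $\Ric_\Sigma(e,e)=0$, hence from $0 = \Ric_\Sigma(e,e) \geq \biRic_M(e,\nu) - |A e|^2 + (\text{correction})$... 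Rather than belabor this, the honest statement is: \emph{the hard part is showing the stability gap must vanish identically, which requires a Cheeger--Gromoll-type foliation / second-variation argument on the product $\Sigma$ to deduce $A\equiv0$ and $\Ric_M(\nu,\nu)=0$ simultaneously; this is a standard but technical rigidity computation analogous to the one in \cite{LiWangCrelle}.}
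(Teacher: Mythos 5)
Your overall strategy is the same as the paper's: establish $\lambda_1(-\Delta_\Sigma+\Ric_\Sigma)\geq 0$, apply \cref{thm:mainIntro} on $\Sigma$ with $\gamma=1<\frac{4}{(n-1)-1}$ (which is where $n\leq 5$ enters), and then feed the splitting back into the stability inequality using cutoffs $\varphi=\phi_a(t)$ in the $\R$-direction. But there are two gaps. The first is in your derivation of the spectral condition: the pointwise bound $\Ric_\Sigma(e,e)\geq\biRic_M(e,\nu)-|A|^2$ does not follow from the Gauss equation (it would require $|A|^2-|Ae|^2\geq\Ric_M(\nu,\nu)$), and the step $\int(|A|^2+\Ric_M(\nu,\nu))\varphi^2\geq\int|A|^2\varphi^2$ silently assumes $\Ric_M(\nu,\nu)\geq0$, which is exactly what $\biRic\geq0$ does \emph{not} give you. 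The correct argument --- this is \cite[Lemma 1.9]{Xu24Dimension}, which the paper simply cites --- keeps the two terms together: for any unit $e\in T\Sigma$, the Gauss equation with $H=0$ gives $|A|^2+\Ric_M(\nu,\nu)=\big(|A|^2-|Ae|^2\big)+\biRic_M(e,\nu)-\Ric_\Sigma(e,e)\geq-\Ric_\Sigma(e,e)$, and then stability yields $\int|\D_\Sigma\varphi|^2\geq-\int\Ric_\Sigma\,\varphi^2$. Your final inequality is true, but each of your two intermediate inequalities is false; only their composition is valid, and only by accident of how you split the correct chain.

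The second gap is the essential one: the rigidity in case (2) ($\mathrm{II}\equiv0$ and $\Ric_M(\nu_\Sigma,\nu_\Sigma)=0$), which you explicitly leave as ``a standard but technical rigidity computation,'' is precisely the content of the corollary beyond \cref{thm:mainIntro}. You correctly identify the obstruction --- the stability integrand $|\mathrm{II}|^2+\Ric_M(\nu_\Sigma,\nu_\Sigma)$ is not manifestly nonnegative --- and you even name the right idea (take $e=\p_t$), but you do not carry it out. The resolution is one line and needs no foliation: on $\Sigma\cong\R\times\Sigma'$ one has $\Ric_\Sigma(\p_t,\p_t)=0$, so the Gauss equation in the direction $\p_t$ reads $0=-\mathrm{II}^2(\p_t,\p_t)+\Ric_M(\p_t,\p_t)-\Sect_M(\p_t\wedge\nu_\Sigma)$, whence $\Ric_M(\nu_\Sigma,\nu_\Sigma)=\biRic_M(\p_t,\nu_\Sigma)-\mathrm{II}^2(\p_t,\p_t)$ and the stability integrand equals $|\mathrm{II}|^2-\mathrm{II}^2(\p_t,\p_t)+\biRic_M(\p_t,\nu_\Sigma)\geq0$ pointwise. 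Since $\int|\D f_a|^2\to0$ as $a\to\infty$ by compactness of $\Sigma'$, this forces $|\mathrm{II}|^2=\mathrm{II}^2(\p_t,\p_t)$ and $\biRic_M(\p_t,\nu_\Sigma)=0$; then $H=0$ kills the remaining entry $\mathrm{II}(\p_t,\p_t)$, so $\mathrm{II}\equiv0$ and $\Ric_M(\nu_\Sigma,\nu_\Sigma)=\biRic_M(\p_t,\nu_\Sigma)=0$. Without this step your proof is incomplete.
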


\textbf{Addendum}.\! When we were completing this paper, we learned that the authors of \cite{CatinoMariMastroliaRoncoroni} had independently obtained results that partially overlap with ours, with different techniques. We agreed with them to post on arXiv the results independently at the same time.
\smallskip

\textbf{Organization}.\! After collecting some preliminary results in \cref{sec:Preliminaries}, in \cref{sec:SpectralSplitting} we prove our main theorems. In \cref{sec:Sharpness} we show the sharpness of the assumptions of \cref{thm:mainIntro}.
\smallskip

\textbf{Acknowledgments}. G.A. acknowledges the financial support of the Courant Institute, and the AMS-Simons Travel grant. M.P. is a member of INdAM - GNAMPA. G.A. and M.P. acknowledge the support of the PRIN
 Project 2022E9CF89 - PNRR Italia Domani, funded by EU Program NextGenerationEU. 
Furthermore, G.A. thanks the hospitality of Università di Napoli Federico II, where part of this work was done during his visit in July 2024. The authors thank Otis Chodosh, whose comments in an email exchange motivated us to understand the spectral version of the splitting theorem. 

\section{Preliminaries}
\label{sec:Preliminaries}

The following lemma is a slight modification of \cite[Lemma 2.3]{Zhu23JDG}.

\begin{lemma}\label{lemma:mu_bubble_function}
    For any $\eps\in(0,\frac12)$ there is a smooth function $\bar h_\eps:(-\frac1\eps,\frac1\eps)\to\RR$ such that 
    \begin{enumerate}
        \item on $(-\frac1\eps,-1]\cup[1,\frac1\eps)$ we have
            \begin{equation}
                \bar h'_\eps+\bar h_\eps^2\geq\eps^2,
            \end{equation}
        
        \item on $[-1,1]$ we have
            \begin{equation}
                |\bar h'_\eps+\bar h_\eps^2|\leq C\eps,
            \end{equation}
            for a universal constant $C>0$ ($C=100$ is enough),
        
        \item there holds $\bar h'_\eps<0$ on $(-\frac1\eps,\frac1\eps)$, $\bar h_\eps(0)=0$, $\lim_{x\to\pm\frac1\eps} \bar h_\eps(x)=\mp\infty$,
        
        \item $\bar h_\eps\to0$ smoothly as $\eps\to0$ on any compact subset of $\RR$.
    \end{enumerate}
\end{lemma}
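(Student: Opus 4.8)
The plan is to write $\bar h_\eps$ down by hand: on the two outer intervals it will be the explicit solution of the Riccati equation $\bar h'+\bar h^2=\eps^2$ that blows up at the endpoints, and on $[-1,1]$ it will be an antiderivative of a carefully chosen strictly negative function, glued smoothly to those outer pieces. I first set $a_\eps:=\eps\coth(1-\eps)$, so that $a_\eps\in(\eps,3\eps)$ for $\eps\in(0,\tfrac12)$, and on $[1,\tfrac1\eps)$ I put $\bar h_\eps(x):=\eps\coth(\eps x-1)$. This is smooth, strictly decreasing, blows up to $-\infty$ as $x\to\tfrac1\eps$, satisfies $\bar h_\eps(1)=-a_\eps$, and, using $\coth^2 y-1/\sinh^2 y=1$,
\[
\bar h_\eps'+\bar h_\eps^2=-\frac{\eps^2}{\sinh^2(\eps x-1)}+\eps^2\coth^2(\eps x-1)=\eps^2,
\]
so Item~(1) holds with equality on $[1,\tfrac1\eps)$; on $(-\tfrac1\eps,-1]$ I take $\bar h_\eps(x):=\eps\coth(\eps x+1)$, the odd reflection of the former, which blows up to $+\infty$ as $x\to-\tfrac1\eps$.

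It then suffices to produce a smooth \emph{even} function $p_\eps$ on $(-\tfrac1\eps,\tfrac1\eps)$ with $p_\eps<0$, with $p_\eps(x)=-\eps^2/\sinh^2(\eps x-1)$ for $x\in[1,\tfrac1\eps)$, and with $\int_0^1 p_\eps=-a_\eps$; setting $\bar h_\eps(x):=\int_0^x p_\eps$ then recovers the two outer formulas (the normalization $\int_0^1 p_\eps=\eps\coth(\eps-1)=-a_\eps$ fixes the constant of integration correctly), so all three formulas agree, and yields $\bar h_\eps\in C^\infty$ with $\bar h_\eps(0)=0$ and $\bar h_\eps'=p_\eps<0$, i.e.\ Item~(3). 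To construct $p_\eps$ I fix once and for all a cutoff $\rho\in C^\infty([0,\infty),[0,1])$ with $\rho\equiv 0$ on $[0,\tfrac12]$ and $\rho\equiv 1$ on $[\tfrac34,\infty)$, and set, for $x\geq 0$,
\[
p_\eps(x):=-\rho(x)\,\frac{\eps^2}{\sinh^2(\eps x-1)}-\bigl(1-\rho(x)\bigr)c_\eps,\qquad
c_\eps:=\frac{a_\eps-\int_0^1\rho(x)\,\eps^2/\sinh^2(\eps x-1)\de x}{\int_0^1\bigl(1-\rho(x)\bigr)\de x},
\]
extended to $x<0$ by $p_\eps(-x):=p_\eps(x)$. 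Since $\rho\equiv 1$ near $1$, this agrees to infinite order at $x=1$ with $-\eps^2/\sinh^2(\eps x-1)$; since $\rho\equiv 0$ near $0$, $p_\eps\equiv -c_\eps$ on $[-\tfrac12,\tfrac12]$; hence $p_\eps$ is smooth and even. A short computation gives $\int_0^1\eps^2/\sinh^2(\eps x-1)\de x=a_\eps-\eps\coth 1<a_\eps$, so the numerator of $c_\eps$ lies in $[\eps\coth 1,\,a_\eps]$ and the denominator in $[\tfrac12,1]$; thus $0<c_\eps\leq C'\eps$, and at every point one of the two nonpositive terms defining $p_\eps$ is strictly negative, so $p_\eps<0$. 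This produces $\bar h_\eps$ satisfying Items~(1) and~(3).

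Items~(2) and~(4) then follow by routine estimates. On $[-1,1]$ one has $\bar h_\eps'=p_\eps$ with $|p_\eps|\leq\max\{c_\eps,\ \eps^2/\sinh^2(\tfrac12)\}\leq C''\eps$, and, since $p_\eps<0$ with $\int_0^1 p_\eps=-a_\eps$, $|\bar h_\eps(x)|\leq\int_0^{|x|}|p_\eps|\leq a_\eps\leq 3\eps$; hence $|\bar h_\eps'+\bar h_\eps^2|\leq C''\eps+9\eps^2\leq 100\,\eps$ for $\eps<\tfrac12$, which is Item~(2). For Item~(4), on a fixed compact $K$ and $\eps$ small, $\bar h_\eps$ equals $\int_0^x p_\eps$ on $K\cap[-1,1]$ with $\|p_\eps\|_{C^k([-1,1])}=O_k(\eps)$ (as $\rho$ is fixed and $c_\eps=O(\eps)$), while on $K\setminus[-1,1]$ it equals $\eps\coth(\eps x\mp 1)$, whose $k$-th derivative is $\eps^{k+1}$ times a function uniformly bounded when $\eps x$ is bounded; in both regions $\bar h_\eps\to 0$ in $C^\infty(K)$. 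The one genuinely delicate point is the choice of $p_\eps$ on $[-1,1]$: it must be simultaneously smooth, strictly negative (to keep $\bar h_\eps$ decreasing), matched to the outer Riccati solution to infinite order at $x=\pm 1$ (for global smoothness), and of total integral exactly $-a_\eps$ (so the two outer branches join continuously through $0$); these competing requirements are reconciled precisely by inserting the tunable negative constant $-c_\eps$ on the subinterval where the cutoff suppresses the Riccati term. Everything else is a direct verification resting on the identity $\coth^2-1/\sinh^2=1$.
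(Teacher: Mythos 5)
Your construction is correct and is essentially the same as the paper's: both use the exact Riccati solutions $\pm\eps\coth(1\pm\eps x)$ (for which $\bar h_\eps'+\bar h_\eps^2=\eps^2$ identically) outside $[-1,1]$ and glue them with a fixed cutoff in the middle. The only cosmetic difference is that the paper interpolates the two branches directly at the level of the function, $\bar h_\eps=\eps\eta\coth(1+\eps x)-\eps(1-\eta)\coth(1-\eps x)$, whereas you interpolate at the level of the derivative and integrate, inserting the tunable constant $-c_\eps$ to match the boundary values — which has the minor advantage of making $\bar h_\eps'<0$ automatic.
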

\begin{proof}
    We can take
    \[\bar h_\eps(x):=\eps\eta(x)\coth(1+\eps x)-\eps\big(1-\eta(x)\big)\coth(1-\eps x),\]
    where $\eta$ is a smooth cutoff function with $\eta|_{(-\infty,-1]}\equiv1$, $\eta|_{[1,+\infty)}\equiv0$, $-1\leq\eta'\leq0$, and $\eta(0)=1/2$. 
\end{proof}

The next lemma provides the function we will use to perturb the function $u_0$ satisfying \eqref{eq:intro-supersol}.

\begin{lemma}\label{lemma:perturb_new}
    Let $(M^n,g)$ be a smooth complete noncompact Riemannian $n$-manifold. Let $\gamma>0$, $f\in\Lip_{\loc}(M)$, and assume there exist $\alpha\in (0,1)$, and $u_0\in C^{2,\alpha}(M)$, such that $-\gamma\Delta u_0+fu_0=0$, and $u_0>0$ on $M$. Then for any $x\in M$ and $r\in(0,1)$, there exists $w\in C^{2,\alpha}(M)$ such that
    \begin{enumerate}
        \item $-2u_0\leq w<0$ on $M$,

        \item $-\gamma\Delta w+ f w>0$ on $M\setminus B(x,r)$.
    \end{enumerate}
\end{lemma}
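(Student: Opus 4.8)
\ The plan is to reduce the statement, via a logarithmic-type substitution, to the construction of a single positive function $\phi$ with $0<\phi\le u_0$, and then to produce $\phi$ as a limit of principal Dirichlet eigenfunctions on an exhaustion of $M$, after perturbing the operator $\mathcal{L}:=-\gamma\Delta+f$ by a large negative bump inside $B(x,r)$.

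I would first record the algebraic identity underlying the reduction: since $-\gamma\Delta u_0+fu_0=0$, for every smooth function $s$ on an open subset of $M$ one has
\[
\mathcal{L}(u_0 s)=-\gamma\Delta(u_0 s)+fu_0 s=-\frac{\gamma}{u_0}\div\!\big(u_0^2\nabla s\big).
\]
Hence it is enough to find $\phi\in C^{2,\alpha}(M)$ with $0<\phi\le u_0$ on $M$ and $\mathcal{L}\phi<0$ on $M\setminus B(x,r)$: then $w:=-\phi$ satisfies $-2u_0\le -u_0\le w<0$, giving (1), and $-\gamma\Delta w+fw=\mathcal{L}w=-\mathcal{L}\phi>0$ on $M\setminus B(x,r)$, giving (2). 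The bound $\phi\le u_0$ is precisely what forces $w/u_0$ to be bounded; this is the delicate point.

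To construct $\phi$, fix a nonzero $b\in C_c^\infty(M)$ with $0\le b$ and $\operatorname{supp}b\subset B(x,r)$, and choose $\lambda>0$ so large that the principal Dirichlet eigenvalue satisfies $\lambda_1^{B(x,r)}(-\gamma\Delta+f-\lambda b)<0$ (possible by testing the Rayleigh quotient against any $\varphi\in C_c^\infty(B(x,r))$ with $\int b\varphi^2>0$, using that $f$ is bounded on the compact set $\overline{B(x,r)}$). Let $\Omega_1\subset\Omega_2\subset\cdots$ be a smooth exhaustion of $M$ with $\overline{B(x,r)}\subset\Omega_1$. For $j$ large, domain monotonicity gives $\mu_j:=\lambda_1^{\Omega_j}(-\gamma\Delta+f-\lambda b)\le\lambda_1^{B(x,r)}(-\gamma\Delta+f-\lambda b)<0$, and we let $\phi_j>0$ be an associated principal eigenfunction, so $-\gamma\Delta\phi_j+(f-\lambda b)\phi_j=\mu_j\phi_j$ in $\Omega_j$ and $\phi_j=0$ on $\partial\Omega_j$; since $f\in\Lip_\loc\subset C^{0,\alpha}_\loc$, elliptic regularity (De Giorgi--Nash--Moser, then Schauder) gives $\phi_j\in C^{2,\alpha}_\loc(\Omega_j)$. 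Normalize $\phi_j$ so that $\sup_{\Omega_j}(\phi_j/u_0)=1$.

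The crucial step, and the one I expect to be the main obstacle, is to rule out concentration at infinity, i.e. to show that this supremum is attained in the \emph{fixed} compact set $\overline{B(x,r)}$. On $\Omega_j\setminus\overline{B(x,r)}$ we have $b\equiv0$, so by the identity above $s_j:=\phi_j/u_0$ satisfies $\div(u_0^2\nabla s_j)=-\tfrac{\mu_j}{\gamma}u_0^2 s_j$ there, with $-\mu_j/\gamma>0$; since $s_j>0$, the strong maximum principle forbids an interior maximum of $s_j$ in $\Omega_j\setminus\overline{B(x,r)}$, while $s_j=0<1$ on $\partial\Omega_j$, so $\max_{\overline{\Omega_j}}s_j$ is attained at some $y_j\in\overline{B(x,r)}$. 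Thus $\phi_j(y_j)=u_0(y_j)\ge\min_{\overline{B(x,r)}}u_0>0$, and Harnack's inequality with a chaining argument plus interior Schauder estimates yields, along a subsequence, $\phi_j\to\phi$ in $C^2_\loc(M)$ with $\phi>0$ on $M$ (strong maximum principle), $\phi\le u_0$, and $-\gamma\Delta\phi+(f-\lambda b)\phi=\mu_\infty\phi$ on $M$ with $\mu_\infty:=\lim_j\mu_j$; evaluating this equation at one point shows $\mu_\infty$ is finite, and $\mu_\infty\le\mu_{j_0}<0$. Schauder estimates then give $\phi\in C^{2,\alpha}(M)$, and on $M\setminus\operatorname{supp}b\supset M\setminus B(x,r)$ we obtain $\mathcal{L}\phi=\mu_\infty\phi<0$. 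Taking $w:=-\phi$ completes the proof; the only nonroutine point is the normalization-plus-maximum-principle argument ensuring $\phi\not\equiv0$.
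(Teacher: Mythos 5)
Your argument is correct in substance, but it takes a genuinely different route from the paper. The paper also works on an exhaustion $\Omega_1\Subset\Omega_2\Subset\dots$, but instead of an eigenvalue perturbation it fixes a small smooth set $U\Subset B(x,r)$, adds a \emph{positive} potential $\eta>0$ (chosen so that $f+\eta$ is smooth), and solves the Dirichlet problems $-\gamma\Delta w_i+(f+\eta)w_i=0$ on $\Omega_i\setminus\overline U$ with boundary data $w_i=u_0$ on $\partial U$ and $w_i=0$ on $\partial\Omega_i$; positivity of $\lambda_1(-\gamma\Delta+f+\eta)$ on $\Omega_i\setminus U$ gives $0<w_i<u_0$ and monotonicity $w_i\leq w_{i+1}$, so the limit is automatically nontrivial (it is pinned by the fixed boundary value $u_0$ on $\partial U$), and the strict supersolution property comes from the term $-\eta w$; finally $w$ is extended inside $U$. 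You instead anchor the limit by a spectral trick: a large negative bump $-\lambda b$ inside $B(x,r)$ forces $\mu_j<0$, and the normalization $\sup(\phi_j/u_0)=1$ together with the maximum-principle argument for $s_j=\phi_j/u_0$ (via the identity $-\gamma\Delta(u_0s)+fu_0s=-\tfrac{\gamma}{u_0}\div(u_0^2\nabla s)$, which is correct) localizes the maximum in $\overline{B(x,r)}$, preventing collapse of the limit; strictness outside the ball then comes from $\mu_\infty<0$. Your version avoids the extension step and even gives the sharper bound $-u_0\leq w$, at the price of handling principal eigenvalues on the exhaustion; the paper's version avoids eigenfunctions altogether and gets monotone convergence for free.

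One point needs a small patch: before you can pass to the limit equation you must know that the decreasing sequence $\mu_j$ is bounded below (uniform interior Schauder bounds require control of the zeroth-order coefficient $\mu_j-f+\lambda b$), and your parenthetical ``evaluating this equation at one point shows $\mu_\infty$ is finite'' is circular, since the limit equation presupposes exactly this. The fix is one line: since $u_0>0$ solves $-\gamma\Delta u_0+fu_0=0$, the substitution $\varphi=u_0\psi$ in the Rayleigh quotient (i.e.\ your own identity) gives $\int\gamma|\nabla\varphi|^2+f\varphi^2=\gamma\int u_0^2|\nabla\psi|^2\geq0$ for all $\varphi\in C_c^\infty(M)$, hence $\mu_j\geq-\lambda\sup b>-\infty$ for every $j$, and $\mu_\infty\in[-\lambda\sup b,\mu_{j_0}]$ with $\mu_\infty<0$. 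With that added, the proof goes through.
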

\begin{proof}
    Let $U\Subset B(x,r)$ be a smooth open set, and let $\eta\in\Lip_{\loc}(M)$ be a strictly positive function such that $f+\eta\in C^\infty(M)$. By \cite[Theorem 1]{Fischer-Colbrie-Schoen} we know that $\lambda_1(-\gamma\Delta+f)\geq0$ on $M$, and therefore $\lambda_1(-\gamma\Delta+f+\eta)\geq0$ on $M\setminus U$ as well. Let $\Omega_1\Subset\Omega_2\Subset\ldots\Subset M$ be an exhaustion in smooth precompact open sets. By \cite[Theorem 1]{Fischer-Colbrie-Schoen} again we have $\lambda_1(-\gamma\Delta+f+\eta)>0$ on $\Omega_i\setminus U$ for every $i$. Hence there exists a solution $w_i\in C^{2,\alpha}(\bar\Omega\setminus U)$ to the Dirichlet problem
    \[\left\{\begin{aligned}
        & -\gamma\Delta w_i+fw_i=-\eta w_i\qquad\text{in }\Omega_i\setminus \overline{U}, \\
        & w_i|_{\p\Omega_i}=0,\\
        & w_i|_{\p U}=u_0.
    \end{aligned}\right.\]
    We observe that $w_i>0$ in $\Omega_i \setminus U$. Indeed, testing the equation against $w_i^- :=\min\{w_i,0\} \in \Lip_0(\Omega_i \setminus U)$ yields $0=\int\gamma|\D w_i^-|^2+f(w_i^-)^2+\eta(w_i^-)^2$; since $\lambda_1(-\gamma\Delta+f+\eta)>0$ on $\Omega_i\setminus U$, then $w_i^-=0$ and thus $w_i\ge 0$. The strict inequality $w_i>0$ in $\Omega_i \setminus U$ then follows by the strong maximum principle.
    By an analogous argument, one finds $w_i\leq w_{i+1}$ in $\Omega_i\setminus U$ for any $i$. Moreover $-\gamma\Delta(u_0-w_i)+f(u_0-w_i)=\eta w_i>0$ in $\Omega_i\setminus U$ for any $i$. Multiplying by $(u_0-w_i)^- :=\min\{u_0 -w_i, 0\}$ and arguing as before implies $w_i<u_0$ in $\Omega_i\setminus U$ for any $i$.

    By standard elliptic estimates, using that $(f+\eta)$ is smooth, up to extracting a subsequence, $w_i$ converges in $C^2_{\rm loc}(M\setminus U)$ to a limit function $w'$, which in turn belongs to $C^{2,\alpha}_{\loc}(M\setminus U)$ by elliptic regularity. We thus define $w:=-w'$. From the above construction, we have $-u_0\leq w<0$ and $-\gamma\Delta w+fw=-\eta w>0$ in $M\setminus \overline{U}\Supset M\setminus B(x,r)$. The lemma then follows by extending $w$ inside $U$ keeping $-2u_0\leq w<0$.
\end{proof}

The next lemma proves auxiliary density estimates for $\mu$-bubbles.
We refer the reader to \cite{AmbrosioFuscoPallara, MaggiBook} for the classical theory of sets of finite perimeter in the Euclidean space, which naturally extends to the case of Riemannian manifolds. 

A \textit{set of locally finite perimeter} in a complete $n$-dimensional Riemannian manifold $(M,g)$ is a measurable set $E\subset M$ such that the characteristic function $\chi_E$ has locally bounded variation.
Given such a set $E$, the total variation of the weak gradient of $\chi_E$ is called the perimeter measure, and usually denoted by $P(E,\cdot)$. By the well-known structure theorem of De Giorgi, the perimeter is concentrated on the so-called reduced boundary $\p^*E$. More precisely, we have $P(E,\cdot)=\H^{n-1}\llcorner\p^*E$. For ease of notation, for any set of locally finite perimeter $E\subset M$ and for any Borel set $A\subset M$, in this paper we will denote $|\partial^* E \cap A|:= P(E, A)$ and $|\partial^* E| := P(E,M)$.

\begin{lemma}\label{lem:DensityEstimatesConvergence}
Let $(M^n,g)$ be a smooth noncompact complete $n$-dimensional Riemannian manifold, with $n \ge 2$. Let $\gamma\geqslant 0$, $\eps\in(0,\frac12)$, and let $u:M\to (0,\infty)$ be a function of class $C^{2,\alpha}$, for some $\alpha \in (0,1)$. Let $\phi:M\to \R$ be a surjective smooth proper $1$-Lipschitz function such that $\Omega_0:= \phi^{-1}\left((-\infty,0)\right)$ has smooth bounded boundary. Let $\bar h_\eps$ be given by \cref{lemma:mu_bubble_function}, and let $h_\eps := c \,\bar h_\eps \circ \phi$ for some $c>0$. Define
\begin{equation*}
    \mathcal P_\eps (E) := \int_{\partial^*E} u^\gamma - \int \left( \chi_{E}- \chi_{\Omega_0} \right) h_\eps u^\gamma,
\end{equation*}
for any \textit{admissible} set of locally finite perimeter $E\subset M$, i.e., such that $E \Delta \Omega_0 \Subset \phi^{-1}\left((-1/\eps, 1/\eps)\right)$ up to negligible sets.

Assume that $\Omega_\eps$ is a minimizer for $\mathcal P_\eps$ with respect to admissible compactly supported variations. Then, up to modify $\Omega_\eps$ on a negligible set, the following holds.

\begin{enumerate}
    \item The reduced boundary $\partial^* \Omega_\eps$ 
    is an (open) hypersurface of class $C^{2,\alpha'}$, 
    for some $\alpha'\in(0,1)$, and $\partial \Omega_\eps\setminus \partial^* \Omega_\eps$ 
    is a closed set with Hausdorff dimension less than or equal to $n-8$.

    \item 
    Let $A\subset M$ be a precompact open set, and let $\eps_0\in(0,1/2)$ such that $\overline{A}\subset \phi^{-1}\left((-1/\eps_0, 1/\eps_0)\right)$. Then there exist $r_0,C_d \in (0,1)$ depending on $n,\gamma, \|\log u\|_{L^\infty(A)}, g|_{\overline{A}}, \eps_0, c$ such that for any $\eps<\eps_0$ there holds 
    \[C_d \leqslant\frac{|\partial \Omega_\eps \cap B(y,\rho)|}{\rho^{n-1}} \leqslant C_d^{-1},
    \]
    for any $y \in A \cap \partial \Omega_\eps$ with $B(y,2r_0)\Subset A$, and any $\rho<r_0$.
\end{enumerate}
\end{lemma}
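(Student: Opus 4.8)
The plan is to analyze the first variation and the structure theory for the $\mu$-bubble $\Omega_\eps$ in two steps corresponding to the two items.

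\textbf{Regularity (Item (1)).} First I would observe that $\Omega_\eps$ is a $\Lambda$-minimizer of the perimeter in the open set $\phi^{-1}\big((-1/\eps,1/\eps)\big)$ with respect to compactly supported variations. Indeed, write $\mathcal P_\eps(E)=\int_{\partial^*E}u^\gamma-\int(\chi_E-\chi_{\Omega_0})h_\eps u^\gamma$. Since $u>0$ is $C^{2,\alpha}$, the weight $u^\gamma$ is $C^{2,\alpha}$ and locally bounded above and below by positive constants on precompact sets; the second term is a volume integral with locally bounded integrand $h_\eps u^\gamma$ (here one uses that $h_\eps=c\,\bar h_\eps\circ\phi$ is smooth and, on any precompact $A\Subset\phi^{-1}((-1/\eps_0,1/\eps_0))$, bounded). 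Comparing $\Omega_\eps$ with competitors $E$ that agree with $\Omega_\eps$ outside a small ball $B(y,\rho)$, the weighted perimeters differ by a factor controlled by $\exp(\pm\gamma\|\log u\|_{L^\infty})$, and the volume term contributes at most $\|h_\eps u^\gamma\|_{L^\infty}|E\Delta\Omega_\eps|\le C\rho^n$. Hence $\Omega_\eps$ minimizes perimeter up to an error $C\rho^n$ in $B(y,\rho)$, i.e.\ it is an $(n-1)$-dimensional $\Lambda$-minimal (almost minimal) boundary in the sense of Almgren--Tamanini. The regularity statement in Item (1) — $\partial^*\Omega_\eps$ a $C^{2,\alpha'}$ hypersurface, singular set of dimension $\le n-8$ — is then the standard interior regularity theory for almost minimizers (Tamanini; see also Schoen--Simon), combined with Schauder estimates to upgrade from $C^{1,\beta}$ to $C^{2,\alpha'}$ using that the first variation gives a prescribed-mean-curvature equation with $C^{\alpha}$ right-hand side ($H=h_\eps+\gamma\,\partial_\nu\log u$ on $\partial^*\Omega_\eps$).

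\textbf{Density estimates (Item (2)).} With the $\Lambda$-minimality in hand, the two-sided density bound is the classical density estimate for almost minimizers. The upper bound $|\partial\Omega_\eps\cap B(y,\rho)|\le C_d^{-1}\rho^{n-1}$ follows by comparing $\Omega_\eps$ with $\Omega_\eps\setminus B(y,\rho)$ (or $\Omega_\eps\cup B(y,\rho)$): the weighted perimeter inside $B(y,\rho)$ is dominated by the weighted perimeter of the sphere $\partial B(y,\rho)$ plus the volume error, giving $\int_{\partial^*\Omega_\eps\cap B(y,\rho)}u^\gamma\le \mathcal H^{n-1}(\partial B(y,\rho))\sup u^\gamma+C\rho^n$; dividing by $\inf u^\gamma$ and using the Riemannian volume comparison on the precompact set $\overline A$ yields the bound for $\rho<r_0$ small. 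The lower bound is the usual isoperimetric/monotonicity argument: for $y\in\partial\Omega_\eps$, the relative isoperimetric inequality on small balls (valid on a fixed precompact Riemannian domain with uniform constants) together with $\Lambda$-minimality shows $m(\rho):=|\Omega_\eps\cap B(y,\rho)|$ satisfies a differential inequality $m(\rho)^{(n-1)/n}\le C\big(m'(\rho)+\rho^{n-1}\big)$, which upon integration (and choosing $r_0$ so that the $\Lambda\rho$ term is absorbed) forces $m(\rho)\ge c\rho^n$ and hence $|\partial\Omega_\eps\cap B(y,\rho)|\ge c\rho^{n-1}$; the same works for the complement, giving that $y$ is a genuine boundary point with the stated density. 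Throughout, all constants depend only on $n$, $\gamma$, $\|\log u\|_{L^\infty(A)}$, the local geometry $g|_{\overline A}$, $\eps_0$, and $c$ — crucially \emph{not} on $\eps<\eps_0$ — because on $\overline A\subset\phi^{-1}((-1/\eps_0,1/\eps_0))$ the function $\bar h_\eps\circ\phi$ is uniformly bounded by $\sup_{[-1/\eps_0,1/\eps_0]}|\bar h_{\eps}|$, and one checks this is bounded uniformly in $\eps<\eps_0$ from the explicit formula in \cref{lemma:mu_bubble_function} (on $[-1/\eps_0,1/\eps_0]$ the argument of $\coth$ stays in $[1-\eps/\eps_0,1+\eps/\eps_0]\subset[1-1/\eps_0^{?}]$ — more simply, $|\bar h_\eps|\le \eps\coth(1-\eps\cdot\eps_0^{-1})$ is bounded since $\eps\eps_0^{-1}<1$; we only need \emph{some} uniform bound, which the formula provides).

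\textbf{Main obstacle.} The only genuinely delicate point is making the $\eps$-uniformity transparent: one must quantify that $h_\eps$ restricted to $\phi^{-1}((-1/\eps_0,1/\eps_0))$ is bounded by a constant independent of $\eps\in(0,\eps_0)$, which is immediate from the closed-form expression for $\bar h_\eps$ (the potential blow-up of $\bar h_\eps$ happens only near $\pm1/\eps$, outside our region). Everything else is a citation to the well-developed regularity and density theory for almost minimizers of perimeter with a $C^{2,\alpha}$ weight on a Riemannian manifold; I would carry it out by first reducing to the almost-minimality property, then quoting Tamanini's regularity theorem for Item (1) and the standard comparison/isoperimetric arguments for Item (2), taking care that all constants are expressed in terms of the listed quantities.
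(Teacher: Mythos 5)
Your proposal is correct and follows essentially the same strategy as the paper: reduce $\Omega_\eps$ to an almost/quasi-minimizer of the weighted perimeter by absorbing the volume term $\int(\chi_E-\chi_{\Omega_0})h_\eps u^\gamma$ into an error of order $|E\Delta\Omega_\eps|$, then invoke standard regularity for Item (1) (the paper cites the Riemannian analogs of \cite[Theorems 27.5 and 28.1]{MaggiBook}) and standard density estimates for Item (2). The only difference in Item (2) is cosmetic: the paper verifies $|\partial^*\Omega_\eps\cap B(y,\rho)|\le C|\partial^*F\cap B(y,\rho)|$ (quasiminimality) via a local isoperimetric inequality and then cites \cite[Lemma 5.1]{KinnunenShanmugalingamQuasiminimalSets}, whereas you run the classical comparison/differential-inequality argument directly; both are fine. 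One small correction: your explicit bound $|\bar h_\eps|\le\eps\coth(1-\eps\eps_0^{-1})$ evaluated at $x=\pm 1/\eps_0$ is \emph{not} uniform in $\eps<\eps_0$ (it blows up as $\eps\to\eps_0^-$); the uniformity you need comes instead from the fact that $\phi(\overline A)$ is a \emph{compact} subset of the open interval $(-1/\eps_0,1/\eps_0)$, so $|\phi|\le K<1/\eps_0$ on $\overline A$ and $1-\eps|\phi|\ge 1-\eps_0 K>0$ uniformly — as you say, some uniform bound is all that is required, and this gives it.
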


\begin{proof}
Item (1) follows from the Riemannian analogs of \cite[Theorem 27.5]{MaggiBook}, and \cite[Theorem 28.1]{MaggiBook}. We now prove Item (2). We denote here by $C>0$ a generic constant depending on $n,\gamma, \|\log u\|_{L^\infty(A)}$, $g|_{\overline{A}}, \eps_0,c$ that may change from line to line. Let $y \in A$ and $\rho>0$ such that $B(y,\rho)\Subset A$. Since $h_\eps\to0$ smoothly on $\overline{A}$ as $\eps\to0$, we have $\|h_\eps\|_{L^\infty(A)} \le C$ for any $\eps<\eps_0$. Hence for any set $F$ of locally finite perimeter such that $F\Delta \Omega_\eps \Subset B(y,\rho)$, we have
\[
\begin{split}
\int_{\partial^* \Omega_\eps} u^\gamma
\leqslant \int_{\partial^* F} u^\gamma + \int h_\eps u^\gamma ( \chi_{\Omega_\eps} - \chi_F)
&\le 
\int_{\partial^* F} u^\gamma + 
\|h_\eps\|_{L^\infty(A)} \|u\|_{L^\infty(A)}^\gamma |F \Delta \Omega_\eps| \\
&\leq \int_{\partial^* F} u^\gamma  + C |F\Delta \Omega_\eps|.
\end{split}
\]
Using the lower and upper bounds of $u$ on $\bar A$, we estimate
\[\begin{split}
    \big|\partial^*\Omega_\eps \cap B(y,\rho)\big|
    &\leqslant C \left(\big|\partial^* F \cap B(y,\rho)\big| + \big|F\Delta \Omega_\eps\big|^{\frac1n} \big|F\Delta \Omega_\eps\big|^{\frac{n-1}{n}} \right) \\
    &\leqslant C \Big(\big|\partial^* F\cap  B(y,\rho)\big| + \big|B(y,\rho)\big|^{\frac1n}\cdot C_{\rm iso} \big|\partial^* ( F\Delta \Omega_\eps)\big| \Big)
    \\
    &\leqslant C \Big( \big|\partial^* F\cap B(y,\rho)\big| + C \rho \Big[ \big|\partial^* \Omega_\eps\cap B(y,\rho)\big| + \big|\partial^* F \cap B(y,\rho)\big| \Big] \Big),
\end{split}\]
where we used a local isoperimetric inequality with Euclidean exponents for sets contained in $A$ \cite{HebeyBook}, and an Ahlfors-type bound for balls contained in $A$. It follows that there exists $r_0>0$ such that, if $\rho<r_0$, then
\[
\big|\partial^*\Omega_\eps\cap  B(y,\rho)\big| \leqslant C \big|\partial^*F\cap B(y,\rho)\big|,
\]
that is, $\Omega_\eps$ is a $C$-quasiminimal set in balls $B(y,2r_0) \Subset A$ in the sense of \cite[Definition 3.1]{KinnunenShanmugalingamQuasiminimalSets}. The claimed density estimates then follow from
\cite[Lemma 5.1]{KinnunenShanmugalingamQuasiminimalSets} and Item (1).
\end{proof}

\section{The spectral splitting theorem}\label{sec:SpectralSplitting}

In this section we prove \cref{thm:mainIntro} and \cref{cor:RigidityTwoEndedBiRicci}.

\begin{proof}[Proof of Theorem \ref{thm:mainIntro}]
    If $\gamma\leq 0$ the statement reduces to the classical splitting theorem. Hence, from now on, we assume $\gamma>0$. By \cite[Theorem 1]{Fischer-Colbrie-Schoen}, and since $\mathrm{Ric}\in\mathrm{Lip}_{\mathrm{loc}}(M)$, the main condition \eqref{eq:main_condition} implies the existence of a function $u_0\in C^{2,\alpha}(M)$ with
    \begin{equation}
        u_0>0,\qquad -\gamma\Delta u_0+\Ric\cdot u_0=0.
    \end{equation}
    
    Fix an arbitrary $x\in M$. We claim that $|\nabla u_0|(x)=0$. (In fact, it will be clear from the proof that it is possible to find a perimeter minimizer $\Omega$ with $x \in \p\Omega$ and $|\nabla u_0|\equiv0$ on $\p\Omega$; see also \cref{rem:ZeroGradientOnPerimeterMinimizer}). Once this is proved, since $x$ is arbitrary, it follows that $u_0$ is constant on the entire $M$, and thus $\Ric\ge0$ pointwise, and the theorem is reduced to the classical splitting theorem. The rest of the proof is devoted to our claim.

    Since $M$ has more than one end, we can find a smooth surjective proper function $\phi:M\to\RR$ such that
    \begin{equation}\label{eq:phi}
        |\D\phi|\leq1,\qquad |\phi(x)|<1/2,\qquad \Omega_0:=\phi^{-1}\big((-\infty,0)\big)\text{ has smooth boundary.}
    \end{equation}
    The construction is as follows. First, we find a smooth surjective proper $1$-Lipschitz function $\tilde\phi:M\to\RR$. Indeed, given a separating compact hypersurface bounding a distinguished end, $\tilde\phi$ can be obtained by smoothing the signed distance function from such end as in \cite[Lemma 2.1]{Zhu23JDG}. Let $L$ be a regular value of $\tilde\phi$ such that $|\tilde\phi(x) - L| < 1/2$. Hence, $\phi:= \tilde\phi - L$ has the desired properties. 
    
    Let $0<\delta,r\ll1$ be such that
    \begin{equation}\label{eq:radii_constraints}
        B(x,1/\delta)\Supset\phi^{-1}\big([-2,2]\big),\qquad
        B(x,2r)\Subset\phi^{-1}\big([-1,1]\big).
    \end{equation}
    Let us apply Lemma \ref{lemma:perturb_new} to the ball $B(x,r)$, with data $u_0$ and $f=\Ric$. Denote by $w_r$ the resulting function.
    For a parameter $a>0$, set $u_{r,a}:=u_0+aw_r$. There exists a small $a_0=a_0(\delta,w_r,u_0)$ so that for all $a<a_0$, we have
    \begin{equation}\label{eq:choice_a_1}
        u_0/2\leq u_{r,a}\leq u_0,\qquad ||u_{r,a}-u_0||_{C^2(B(x,1/\delta))}<\delta,
    \end{equation}
    and
    \begin{equation}\label{eq:choice_a_2}
        \left\{\begin{aligned}
            & -\gamma\Delta u_{r,a}+\Ric\cdot u_{r,a}>0\qquad\text{in $M\setminus B(x,r)$,} \\
            & -\gamma\Delta u_{r,a}+\Ric\cdot u_{r,a}\geq-\delta \cdot u_{r,a}
            \qquad\text{in $B(x,r)$.} 
        \end{aligned}\right.
    \end{equation}
    In particular, in light of \eqref{eq:radii_constraints} and \eqref{eq:choice_a_2}, we have
    \begin{equation}\label{eq:choice_a_3}
        -\gamma\Delta u_{r,a}+\Ric\cdot u_{r,a}\geq\mu\cdot u_{r,a}\qquad\text{in $\phi^{-1}\big([-2,2]\big)\setminus B(x,2r)$,}
    \end{equation}
    for some constant $\mu=\mu(a,w_r,u_0)>0$.

    For a constant $c_0=c_0(n,\gamma)>0$ that will be defined below in \eqref{eqn:Defnc0}, and for any $\eps<1/10$, let $h_\eps$ be defined as
    \begin{equation}
        h_\eps(x):=c_0^{-1}\bar h_\eps(\phi(x)),\qquad\forall x\in\phi^{-1}\big((-1/\eps,1/\eps)\big),
    \end{equation}
    where $\bar h_\eps$ is as in Lemma \ref{lemma:mu_bubble_function}. In particular, on $\phi^{-1}\big((-1/\eps,-1]\cup[1,1/\eps)\big)$ we have 
    \begin{equation}\label{eq:dh+h2}
        \begin{split}
                c_0h_\eps^2 - |\D h_\eps| &= 
                c_0^{-1}\Big[\bar h_\eps(\phi)^2-|\bar h'_\eps(\phi)||\nabla\phi|\Big]
                \geq c_0^{-1}\eps^2,
        \end{split}
    \end{equation}
    where we used that $\phi$ is $1$-Lipschitz and Lemma \ref{lemma:mu_bubble_function}(1)(3). Then by Lemma \ref{lemma:mu_bubble_function}(2)(3), inside $\phi^{-1}\big([-1,1]\big)$ we have
    \begin{equation}\label{eq:dh+h2_2}
        c_0h_\eps^2-|\D h_\eps|\geq-Cc_0^{-1}\eps,
    \end{equation}
    where $C$ is the constant in \cref{lemma:mu_bubble_function}(2). For $\delta,r,a$ small enough as indicated above, we perform the following $\mu$-bubble argument. For notational simplicity, we denote $u=u_{r,a}$. For any set of finite perimeter $\Omega$ with $\Omega\Delta \Omega_0 \subset \phi^{-1}\big((-1/\eps,1/\eps)\big)$, define the energy
    \begin{equation}\label{eqn:MuBubbleFunctional2}
        \mathcal{P}_\eps (\Omega):=\int_{\partial^*\Omega}u^\gamma-\int(\chi_\Omega-\chi_{\Omega_0}) h_\eps u^\gamma.
    \end{equation}
    As argued in \cite[Proposition 12]{ChodoshliSoapBubbles24}, recalling that $h_\varepsilon$ diverges as $\phi\to \pm 1/\eps$ by \cref{lemma:mu_bubble_function}(3), there exists a minimizer $\Omega_\eps$ for $\mathcal{P}_\eps$ with $\Omega_\eps\Delta \Omega_0 \Subset \phi^{-1}\big((-1/\eps,1/\eps)\big)$. 

    Since $u\in C^{2,\alpha}$, the regularity result in \cref{lem:DensityEstimatesConvergence}(1) applies to $\Omega_\eps$.
    Let $\nu$ denote the outer unit normal at $\partial^*\Omega_\eps$, and let $\varphi\in C^\infty(M)$. For an arbitrary smooth variation $\{F_t\}_{t\in(-t_0,t_0)}$, with $F_0=\Omega_\eps$, and with variation field equal to $\varphi\nu$ at $t=0$, we compute the first variation\footnote{When $n\geq8$, the boundary $\p\Omega_\eps$ may contain a nonempty codimension 8 singular set $\p\Omega_\eps \setminus \p^*\Omega_\epsilon$, see \cref{lem:DensityEstimatesConvergence}(1). However, arguing as in \cite[Appendix A]{AX24}, multiplying $\varphi$ by a cut off function vanishing on $\p\Omega_\eps \setminus \p^*\Omega_\epsilon$, one can calculate the first and second variation of $\mathcal P_\eps$, and carry out computations leading to \eqref{eq:2ndvar_final}. We omit these standard computations.}
\begin{equation}
    0=\frac{\mathrm{d}}{\mathrm{d}t} \mathcal{P}_\eps (F_t)\Big|_{t=0}=\int_{\partial^*\Omega_\eps}\big(H+\gamma u^{-1}u_\nu-h_\eps\big)u^\gamma\varphi,
\end{equation}
where $u_\nu:= \langle\nabla u, \nu \rangle$. Since $\varphi$ is arbitrary we have $H=h_\eps-\gamma u^{-1}u_\nu$. Then, computing the second variation, we obtain
\begin{equation}\label{eq:2nd_var}
    \begin{aligned}
        0 &\leqslant \frac {\mathrm{d}^2}{\mathrm{d}t^2} 
    \mathcal{P}_\eps (F_t)\Big|_{t=0} \\
        &= \int_{\partial^*\Omega_\eps}\Big[-\Delta_{\partial^*\Omega_\eps}\varphi-|\text{II}|^2\varphi-\Ric(\nu,\nu)\varphi-\gamma u^{-2}u_\nu^2\varphi \\
        &\hspace{72pt} +\gamma u^{-1}\varphi\big(\Delta u-\Delta_{\partial^*\Omega_\eps}u-Hu_\nu\big)-\gamma u^{-1}\langle\nabla_{\partial^*\Omega_\eps}u,\nabla_{\partial^*\Omega_\eps}\varphi\rangle - \scal{\nabla h_\eps, \nu} \varphi\Big]u^\gamma\varphi.
    \end{aligned}
\end{equation}
Since $\partial\Omega_\eps$ is compact, 
we can take $\varphi:= u^{-\gamma/2}$ in \eqref{eq:2nd_var}. As a result, we get
\begin{equation}\label{eqn:ControlToDo2}
    \begin{aligned}
        0 &\leqslant \int_{\partial^*\Omega_\eps}\Big[
            -\Delta_{\partial^*\Omega_\eps}( u^{-\gamma/2}) u^{\gamma/2}
            -\gamma u^{-1}\Delta_{\partial^*\Omega_\eps}u
            -\gamma u^{\gamma/2-1} \langle\nabla_{\partial^*\Omega_\eps}u,\nabla_{\partial^*\Omega_\eps}( u^{-\gamma/2})\rangle\Big] \\
        &\qquad\qquad +\Big[-|\text{II}|^2-\gamma u^{-2}u_\nu^2 
        -\gamma Hu^{-1}u_\nu
        - \scal{\nabla h_\eps, \nu} \Big]
        +\Big[\gamma u^{-1}\Delta u-\Ric(\nu,\nu)\Big]\\
        &=: \int_{\partial^*\Omega_\eps} P+Q+R.
    \end{aligned}
\end{equation}

First, integrating by parts we have
\begin{equation}\label{eq:P}
    \int_{\partial^*\Omega_\eps} P=\int_{\partial^*\Omega_\eps}  \frac{\gamma^2}{4}u^{-2} |\nabla_{\partial^*\Omega_\eps} u|^2
    -\gamma u^{-2}|\nabla_{\partial^*\Omega_\eps} u|^2 
    =\int_{\partial^*\Omega_\eps}\Big(\frac{\gamma^2}{4}-\gamma\Big)u^{-2}|\nabla_{\partial^*\Omega_\eps} u|^2,
\end{equation}
and notice that $\frac{\gamma^2}{4}-\gamma = \gamma\big(\frac{\gamma}{4}-1\big)<0$ for $0< \gamma < \frac4{n-1}$.
We now deal with the ``normal'' term $Q$. Denote for simplicity $Y:=u^{-1}u_\nu$. Using the first variation $H=h_\eps-\gamma Y$ and the trace inequality $|\mathrm{II}|^2\geqslant H^2/(n-1)$, we get
\begin{equation}\label{eqn:Central}
    \begin{aligned}
        Q     &\leq -\frac{H^2}{n-1}-\gamma Y^2-\gamma HY+|\D h_\eps|  \\
            &=-\frac{(h_\eps-\gamma Y)^2}{n-1}-\gamma Y^2-\gamma(h_\eps -\gamma Y)Y+|\D h_\eps|  \\
            &= \Big[-\frac{h_\eps^2}{n-1}+\frac{3-n}{n-1}\gamma h_\eps Y+\Big(\frac{n-2}{n-1}\gamma^2-\gamma\Big)Y^2\Big]+|\D h_\eps|.
    \end{aligned}
\end{equation}
Now, computing the discriminant of the $(h_\eps,Y)$-polynomial above, one gets
\begin{equation}\label{eqn:Discriminant}
    \begin{aligned}
        \Delta &= \gamma^2\Big(\frac{3-n}{n-1}\Big)^2+4\frac{\gamma}{n-1}\Big(\frac{n-2}{n-1}\gamma-1\Big)
        =\gamma\Big(\gamma-\frac{4}{n-1}\Big)
        <0.
    \end{aligned}
\end{equation}
Therefore, by continuity, we conclude that 
\begin{equation}\label{eqn:Defnc0}
Q\leq -c_0h_\eps^2-c_0Y^2+|\D h_\eps|,
\end{equation}
for some sufficiently small constant $c_0=c_0(n,\gamma)>0$, which can be chosen so that $c_0 <\gamma-\gamma^2/4$. Notice this $c_0$ is the constant we are using in \eqref{eq:dh+h2}. On the other hand, for the $R$ term, by \eqref{eq:choice_a_2}, and \eqref{eq:choice_a_3} we have
\begin{equation}\label{eq:R}
    R\leq\delta\chi_{B(x,r)}-\mu\chi_{\phi^{-1}([-2,2])\setminus B(x,2r)}.
\end{equation}

Back to \eqref{eqn:ControlToDo2}, and inserting \eqref{eq:dh+h2}, \eqref{eq:dh+h2_2}, \eqref{eq:P}, \eqref{eqn:Defnc0}, and \eqref{eq:R}, we obtain
\begin{equation}\label{eq:2ndvar_final}
    \begin{aligned}
        0\leq \int_{\partial^*\Omega_\eps} P+Q+R &\leq \int_{\p^*\Omega_\eps}\Big[\!-c_0u^{-2}|\D_{\p^*\Omega_\eps}u|^2-c_0u^{-2}u_\nu^2\Big]+\Big[\!-c_0h_\eps^2+|\D h_\eps|\Big]+R \\
        &\leq \int_{\p^*\Omega_\eps}\Big[\!-c_0u^{-2}|\D_{\p^*\Omega_\eps}u|^2-c_0u^{-2}u_\nu^2\Big] \\
        &\qquad - c_0^{-1}\eps^2\big|\p^*\Omega_\eps\setminus\phi^{-1}([-1,1])\big|
        + Cc_0^{-1}\eps\big|\p^*\Omega_\eps\cap\phi^{-1}([-1,1])\big|\\
        &\qquad +\delta\big|\p^*\Omega_\eps\cap B(x,r)\big|
        - \mu\big|\p^*\Omega_\eps\cap\big(\phi^{-1}([-2,2])\setminus B(x,2r)\big)\big|.
    \end{aligned}
\end{equation}
First we note that $\p^*\Omega_\eps\cap\phi^{-1}\big([-1,1]\big)\ne\emptyset$. Indeed, otherwise, \eqref{eq:2ndvar_final} and $B(x,r)\Subset \phi^{-1}\big([-1,1]\big)$ together would imply
\[
0\leqslant -c_0^{-1}\varepsilon^2|\partial^*\Omega_\eps|<0.
\]
Next, we claim that $\p^*\Omega_\eps\cap B(x,2r)\ne\emptyset$ whenever $\eps<C^{-1}c_0\mu$. Indeed, since $\p^*\Omega_\eps\cap\phi^{-1}\big([-1,1]\big)\ne\emptyset$, we have $|\p^*\Omega_\eps\cap\phi^{-1}\big([-2,2]\big)|>0$. Hence if $\p^*\Omega_\eps\cap B(x,2r)=\emptyset$, \eqref{eq:2ndvar_final} would imply
\[
0\leq Cc_0^{-1}\eps\big|\p^*\Omega_\eps\cap\phi^{-1}([-1,1])\big|-\mu\big|\p^*\Omega_\eps\cap\phi^{-1}([-2,2])\big|<0.
\]

Let now $m_0\in \mathbb N$. For every $m\geqslant m_0$, define $\delta_m:=\frac{1}{m}$, and $r_m:=\frac{1}{m}$. If $m_0$ is chosen large enough, according to what said above, for every $\delta_m,r_m$ we can find $a_m,\mu_m>0$ small enough such that \eqref{eq:radii_constraints}, \eqref{eq:choice_a_1}, \eqref{eq:choice_a_2}, and \eqref{eq:choice_a_3} are met. Denote by $u_m=u_0+a_mw_{r_m}$ the perturbed functions. Choosing $\varepsilon_m<\min\{\frac{1}{m},C^{-1}c_0\mu_m\}$, from what we said above we know that there exists $y_m\in \partial^*\Omega_{\varepsilon_m}\cap B(x,2r_m)$.

In particular, $y_m \to x$ as $m\to\infty$. Moreover, by \eqref{eq:choice_a_1}, $\log u_m$ is locally bounded in $L^\infty$, uniformly with respect to $m$, hence the density estimates provided by \cref{lem:DensityEstimatesConvergence}(2) are uniform locally around $x$ as $m\to \infty$. If by contradiction $|\D u_0|(x)>0$, using \cref{lem:DensityEstimatesConvergence}(2), and recalling \eqref{eq:choice_a_1}, we find $\rho,\eta>0$ such that
\begin{equation}\label{eqn:DensityLowerBound}
    \int_{\partial^*\Omega_{\eps_m}\cap B(x,\rho)} u_m^{-2}|\D u_m|^2
    \geq \int_{\partial^*\Omega_{\eps_m} \cap B(y_m,\rho/2)} u_m^{-2}|\nabla u_m|^2
    \geq \eta,
\end{equation}
for any large $m$.

On the other hand, $|\p^*\Omega_{\eps_m} \cap \phi^{-1}([-1,1])|$ has an upper bound independent of $m$. Indeed, this follows by comparing $\P_{\eps_m}(\Omega_{\eps_m})\leq\P_{\eps_m}(\Omega_0)$, which says
\begin{equation}\label{eqn:Minimality}
    \int_{\p^*\Omega_{\eps_m}}u_m^\gamma\leq\int_{\p\Omega_0}u_m^\gamma+\int\big(\chi_{\Omega_{\eps_m}}-\chi_{\Omega_0}\big)h_{\eps_m}u_m^\gamma\leq\int_{\p\Omega_0}u_m^\gamma,
\end{equation}
by the property of $\overline{h}_{\varepsilon_m}$ in \cref{lemma:mu_bubble_function}(3); and by the fact that $u_m$ are uniformly bounded from above and below in $\phi^{-1}\big([-1,1]\big)$ by \eqref{eq:choice_a_1}. Combining \eqref{eq:2ndvar_final} and \eqref{eqn:DensityLowerBound} we finally find that
\[\begin{aligned}
    0< c_0 \eta &\leq \liminf_{m\to\infty}
    Cc_0^{-1}\eps_m\big|\p^*\Omega_{\eps_m}\cap\phi^{-1}([-1,1])\big| +\delta_m\big|\p^*\Omega_{\eps_m}\cap B(x,r_m)\big| =0,
\end{aligned}\]
providing a contradiction.
\end{proof}

\begin{remark}\label{rem:ZeroGradientOnPerimeterMinimizer}
    In the setting and notation of the proof of \cref{thm:mainIntro}, it is possible to show that there exists a perimeter minimizer $\Omega$ with finite perimeter such that $x \in \partial\Omega$ and $|\nabla u_0|\equiv 0$ on $\p \Omega$. Indeed, one can use \eqref{eqn:Minimality} and \cref{lemma:mu_bubble_function}(4) to prove that $\Omega_{\eps_m}$ converges, up to subsequences, to a local minimizer $\Omega$ of the weighted perimeter $E\mapsto \int_{\partial^*E} u_0^\gamma$. Equation \eqref{eq:2ndvar_final} eventually implies $|\nabla u_0|\equiv 0$ on $\partial \Omega$. A posteriori, by constancy of $u_0$, $\Omega$ is in fact a local perimeter minimizer. Finally, a posteriori of the splitting $M\cong \mathbb R\times N$, and the fact that $N$ is compact, it also follows that $\partial\Omega\cong N$ isometrically.
    \smallskip

    Similarly, we remark that the method used in the proof of \cref{thm:mainIntro} gives a different alternative proof of the Cheeger--Gromoll splitting theorem \cite{CheegerGromoll} in case $(M^n,g)$ has at least two ends, $\mathrm{Ric}\geq 0$, and $\inf_{x\in M}|B(x,1)|>0$. We sketch here the argument. When the previous assumptions are in force, in the proof of \cref{thm:mainIntro} we can considerably simplify the argument by choosing $u_0\equiv 1$ and $\mu=a=\delta=0$, in fact avoiding the complications coming from the surface-capturing technique. Similarly to what said at the beginning of this remark, in this case the corresponding $\mu$-bubbles $\Omega_{\varepsilon}$ will converge, as $\varepsilon\to 0$, to a local perimeter minimizer $\Omega$ with finite perimeter (we do not care whether or not $x\in \partial\Omega$). Uniform density estimates on the boundary of perimeter minimizers\footnote{Here we use $\inf_{x\in M}|B(x,1)|>0$ and $\mathrm{Ric}\geq 0$.} (cf., e.g., \cite[Corollary 4.15]{AntonelliPasqualettoPozzettaSemola1}), and the fact that $\Omega$ has finite perimeter imply that $\partial\Omega$ is compact.
    
    Hence $\partial \Omega$ is a (possibly disconnected) minimal hypersurface, thus the splitting follows from a classical Laplacian comparison argument (see, e.g., \cite{Kasue83}). Alternatively, the splitting also follows from a foliation type argument (see, e.g., \cite[Remark 6]{AX24}).
\end{remark}

\begin{proof}[Proof of \cref{cor:RigidityTwoEndedBiRicci}]
    By \cite[Lemma 1.9]{Xu24Dimension} we deduce $\lambda_1(-\Delta_\Sigma + \mathrm{Ric}_\Sigma)\geq 0$. Since $n\leq 5$, we get $\frac{4}{(n-1)-1}> 1$. If $\Sigma$ has one end, the proof is concluded. If instead $\Sigma$ has at least two ends, by applying \cref{thm:mainIntro} to $\Sigma$, we get that $\Sigma$ splits isometrically as $\mathbb R\times \Sigma'$, with $\mathrm{Ric}_{\Sigma'}\geq 0$ and $\Sigma'$ is compact.

    To complete the proof of (2), for every $a>2$ let $\phi_a:\mathbb R\to\mathbb R$ be the function such that: $\phi|_{[-1,1]}\equiv 1$, $\phi|_{\mathbb R\setminus [-a,a]}\equiv 0$, and $\phi$ is linear on $[-a,-1]\cup [1,a]$. Let $f_a(t,\sigma'):=\phi_a(t)$ be defined on $\mathbb R\times \Sigma'$. Let $\p_t$ be the unit tangent vector of $\Sigma$ in the $\R$ direction. By Gauss equation there holds $0=\Ric_\Sigma(\p_t ,\p_t)=-\text{II}^2(\p_t,\p_t)+\Ric_M(\p_t,\p_t)-\Sect(\p_t \wedge \nu_\Sigma)$. Hence testing the stability condition of $\Sigma$ on the function $f_{a}$, we have
    \[0\leq\int|\D f_a|^2-\Big(|\text{II}|^2+\Ric_M(\nu_\Sigma,\nu_\Sigma)\Big)f_a^2
        = \int|\D f_a|^2-\Big(|\text{II}|^2-\text{II}^2(\p_t,\p_t)+\biRic_M(\p_t,\nu_\Sigma)\Big)f_a^2.\]
    Sending $a\to +\infty$, since $\Sigma'$ is compact we get that $\biRic_M(\p_t,\nu_\Sigma)=0$ and $\text{II}=\text{II}(\p_t,\p_t)\mathrm{d}t^2$. Moreover, since $H=0$, this implies $\text{II}=0$ and hence $\mathrm{Ric}_M(\nu_\Sigma,\nu_\Sigma)=0$, as desired.
\end{proof}

\section{Sharpness and examples}\label{sec:Sharpness}

The following remarks show the sharpness of our assumptions.

\begin{remark}[Sharpness of the range of $\gamma$]\label{rmk:gamma}
    Let $n\geq2$ and $\gamma\geq\frac4{n-1}$. Consider the manifold
    \[
    (M,g)=\big(\R\times\mathbb T^{n-1},\mathrm{d}t^2+e^{2t}g_{\mathbb T^{n-1}}\big).
    \]
    Changing variables $y^{-2}:=e^{2t}$, one checks that $M$ is a quotient of the hyperbolic space, hence $\Ric\equiv-(n-1)$. We may directly compute that the function $u=e^{-\frac{n-1}2t}$ satisfies
    \[\begin{aligned}
        -\gamma\Delta u+\Ric\cdot u &= e^{-\frac{n-1}2t}\Big[\gamma\frac{(n-1)^2}4-(n-1)\Big]\geq0.
    \end{aligned}\]
    Hence $M$ satisfies \eqref{eq:main_condition}, has two ends, but it does not split isometrically.
\end{remark}

\begin{remark}[Small compact perturbation of $\R^n$]\label{rmk:Rn}
    Let $n\geq3$ and $\gamma>0$. Then there exists a smooth function $u$ with $u>0$, $\Delta u\leq0$ and $\Delta u<0$ in the Euclidean ball $B(0,1)$. One example is
    \[
    u=u(r):=\int_r^\infty\Big[2^{-n}\eta(s)s+\big(1-\eta(s)\big)s^{1-n}\Big]\,\mathrm{d}s.
    \]
    where $r=|x|$ and $\eta$ is a cutoff function with $\eta|_{[0,1]}\equiv1$, $\eta|_{[2,\infty)}\equiv0$, and $\eta'\leq0$. Indeed, we may calculate
    \[\begin{aligned}
        \Delta u &= u''+\frac{n-1}ru' \\
        &= \Big[-2^{-n}\eta'r-2^{-n}\eta+(n-1)(1-\eta)r^{-n}+\eta'r^{1-n}\Big]+\frac{n-1}r\Big[-2^{-n}\,r\, \eta -(1-\eta)r^{1-n}\Big] \\
        &= -2^{-n}n\eta+\eta'r(r^{-n}-2^{-n}),
    \end{aligned}\]
    which is nonpositive on $\R^n$ since $\eta'\leq0$ and $r^{-n}-2^{-n}\geq0$ when $r\leq2$. Moreover $\Delta u \le -2^{-n}n<0$ when $r<1$. Denote by $g_0$ the Euclidean metric, and let $g$ be a Riemannian metric with $\{g\ne g_0\}\Subset B(0,1/2)$. By continuity, it follows that $-\gamma\Delta_g u+\Ric_g\cdot u\geq0$, provided $||g-g_0||_{C^2}$ is sufficiently small. This shows that any sufficiently small compact perturbation of flat $\R^n$ satisfies the main condition \eqref{eq:main_condition}. Moreover $(\R^n,g)$ contains lines but, suitably choosing $g$, it does not split isometrically.
\end{remark}

\printbibliography[title={References}]

\typeout{get arXiv to do 4 passes: Label(s) may have changed. Rerun}
\end{document}